\documentclass[a4paper,10pt]{amsart}

\usepackage[english]{babel}
\usepackage[utf8]{inputenc}

\usepackage{mathrsfs}
\usepackage{mathtools}
\usepackage{dsfont}
\usepackage{amssymb}
\usepackage{amsmath}
\usepackage{enumerate}
\usepackage{amsfonts}
\usepackage{amsthm,amsmath}
\usepackage{bbm}
\usepackage{color}
\usepackage{hyperref}

\numberwithin{equation}{section}

\newcommand{\R}{\mathds{R}}

\newcommand{\cT}{\mathscr{T}}

\DeclarePairedDelimiter\norm{\lVert}{\rVert}

\newcommand{\argument}{\,\cdot\,}
\renewcommand{\phi}{\varphi}

\newcommand{\dx}{\:\mathrm{d}}

\definecolor{mygreen}{rgb}{0.1,0.75,0.2}

\theoremstyle{definition}
\newtheorem{definition}{Definition}

\theoremstyle{plain}

\newtheorem{theorem}[definition]{Theorem}

\begin{document}

\title[On a Convergence Theorem for Positive Semigroups]{On a Convergence Theorem for Semigroups of Positive Integral Operators}
\author{Moritz Gerlach}
\address{Moritz Gerlach\\Universit\"at Potsdam\\Institut f\"ur Mathematik\\Karl-Liebknecht-Stra{\ss}e 24--25\\14476 Potsdam\\Germany}
\email{moritz.gerlach@uni-potsdam.de}
\author{Jochen Gl\"uck}
\address{Jochen Gl\"uck\\Universit\"at Ulm\\Institut f\"ur Angewandte Analysis\\89069 Ulm, Germany}
\email{jochen.glueck@uni-ulm.de}

\date{\today}
\begin{abstract}
	We give a new and very short proof of a theorem of Greiner asserting that a positive and contractive $C_0$-semigroup on an $L^p$-space 
	is strongly convergent in case that it has a strictly positive fixed point and contains an integral operator. 
	Our proof is a streamlined version of a much more general approach to the asymptotic theory of positive semigroups developed recently by the authors. 
	Under the assumptions of Greiner's theorem, this approach becomes particularly elegant and simple. We also give an outlook on several generalisations of this result.
\end{abstract}

\maketitle

Consider a positive and contractive $C_0$-semigroup $\cT \coloneqq (T_t)_{t \in [0,\infty)}$ on $L^p \coloneqq L^p(\Omega,\mu)$, where $(\Omega,\mu)$ is a $\sigma$-finite measure space 
and $p \in [1,\infty)$. 
By positivity we mean that $f \ge 0$ implies $T_tf \ge 0$ for all $f \in L^p$ and all times $t \ge 0$. We are interested in studying the behaviour of $T_t$ as $t \to \infty$.

In applications it frequently occurs that $\cT$ consists of so-called \emph{integral operators} (or \emph{kernel operators}). 
Here, a positive linear operator $T\colon L^p \to L^p$ is called an integral operators if for a measurable function $k\colon \Omega \times \Omega \to [0,\infty)$ 
and all $f \in L^p$ the following holds: we have $k(\argument,y)f(\argument) \in L^1(\Omega,\mu)$ for almost all $y \in \Omega$ and 
$Tf = \int_\Omega k(x, \argument)f(x) \dx \mu(x)$. If at least one of the operators $T_t$ is an integral operator and if the semigroup $\cT$ has a fixed point 
which is strictly positive almost everywhere, 
then one automatically obtains strong convergence of $T_t$ as time tends to infinity. 
This was first observed by Greiner \cite[Kor~3.11]{greiner1982} and his result reads as follows.

\begin{theorem} \label{thm:greiner}
	Let $(\Omega,\mu)$ be a $\sigma$-finite measure space, 
	let $p \in [1,\infty)$ and let $\cT \coloneqq (T_t)_{t \in [0,\infty)}$ be a positive and contractive $C_0$-semigroup on $L^p \coloneqq L^p(\Omega,\mu)$. 
	If $\cT$ has a fixed point $f_0$ which fulfils $f_0 > 0$ almost everywhere and if $T_{t_0}$ is an integral operator for at least one time $t_0 \ge 0$, then $T_t$ converges strongly as $t \to \infty$.
\end{theorem}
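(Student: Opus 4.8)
The plan is to run the Jacobs--de Leeuw--Glicksberg (JdLG) machinery and to let the integral operator do two things: trivialise the reversible part and upgrade weak convergence to norm convergence. I would begin by collecting the properties of kernel operators that make this work. Since the integral operators form a two-sided ideal in the positive operators, $T_t=T_{t-t_0}T_{t_0}$ is a kernel operator for \emph{every} $t\ge t_0$; I will use this continuum of kernel operators crucially. Secondly, kernel operators are AM-compact in the following quantitative sense: if $0\le\abs{g_n}\le g$ and $g_n\to0$ weakly, then writing $T_{t_0}g_n(y)=\int_\Omega k_{t_0}(x,y)g_n(x)\dx\mu(x)$ with $k_{t_0}(\argument,y)\in L^{p'}$ for a.e.\ $y$ gives $T_{t_0}g_n(y)\to0$ pointwise while $\abs{T_{t_0}g_n}\le T_{t_0}g\in L^p$, so dominated convergence yields $\norm{T_{t_0}g_n}\to0$. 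Finally, because $f_0>0$ a.e., the principal ideal $I_{f_0}=\{f:\abs{f}\le cf_0\text{ for some }c>0\}$ is dense in $L^p$, and for $f\in I_{f_0}$ the orbit is order bounded, $\abs{T_tf}\le cT_tf_0=cf_0$. Reflexivity (for $p>1$), respectively the weak compactness of the order interval $[-cf_0,cf_0]$ together with density (for $p=1$), then shows that all orbits are relatively weakly compact, so the JdLG splitting $L^p=X_r\oplus X_s$ is available, with $X_r$ the closed linear span of the eigenvectors $T_th=e^{i\beta t}h$ to unimodular characters and $X_s$ the flight vectors.

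The heart of the proof --- and the step I expect to be the main obstacle --- is to show that the reversible part is trivial, i.e.\ $X_r=\operatorname{Fix}(\cT)$. So let $h\ne0$ satisfy $T_th=e^{i\beta t}h$ for all $t$ and assume, for a contradiction, that $\beta\ne0$. Contractivity first forces $T_t\abs{h}=\abs{h}$ for all $t$: indeed $\abs{h}=\abs{T_th}\le T_t\abs{h}$ while $\norm{T_t\abs{h}}\le\norm{\abs{h}}$, and $T_t\abs{h}\ge\abs{h}\ge0$ then gives equality. Writing $u:=h/\abs{h}$ on $A:=\{\abs{h}>0\}$ and using equality in the triangle inequality for the integral $e^{i\beta t}h(y)=\int_\Omega k_t(x,y)h(x)\dx\mu(x)$, I get the phase rigidity
\[
  u(x)=e^{i\beta t}u(y)\quad\text{for a.e.\ }(x,y)\text{ with }k_t(x,y)>0,
\]
valid for every $t\ge t_0$. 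For fixed $t$ and a.e.\ $y\in A$ the set $S_t(y):=\{x:k_t(x,y)>0\}$ has positive measure, since $\int_{S_t(y)}k_t(x,y)\abs{h(x)}\dx\mu(x)=T_t\abs{h}(y)=\abs{h}(y)>0$, and on it $u\equiv e^{i\beta t}u(y)$. By a Fubini argument over $(t,y)$ the exceptional $t$-set is Lebesgue-null for a.e.\ $y$, so I may fix one $y\in A$ for which this holds for a.e.\ $t$ in an interval $I$ of length $<2\pi/\abs{\beta}$. On $I$ the values $e^{i\beta t}$ are pairwise distinct, hence the positive-measure sets $S_t(y)$ are pairwise essentially disjoint (two of them could overlap only where $u$ takes two different values). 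This produces uncountably many pairwise disjoint sets of positive measure, contradicting $\sigma$-finiteness of $\mu$. Therefore $\beta=0$, every unimodular eigenvector is fixed, and $X_r=\operatorname{Fix}(\cT)$.

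Once the reversible part has collapsed, JdLG yields $T_t\to P$ in the weak operator topology, where $P$ is the contractive projection onto $\operatorname{Fix}(\cT)$ along $X_s$. To upgrade this to norm convergence I would use the first paragraph. For $f\in I_{f_0}$ and $t\ge t_0$, using $T_{t_0}Pf=Pf$,
\[
  T_tf-Pf=T_{t_0}\bigl(T_{t-t_0}f-Pf\bigr)=:T_{t_0}w_t.
\]
Here $w_t\to0$ weakly, and $w_t$ is order bounded: $\abs{T_{t-t_0}f}\le cf_0$, and since $[-cf_0,cf_0]$ is weakly closed its weak limit satisfies $\abs{Pf}\le cf_0$, so $\abs{w_t}\le2cf_0$. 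Applying the AM-compactness of $T_{t_0}$ along any sequence $t_n\to\infty$ gives $\norm{T_tf-Pf}=\norm{T_{t_0}w_t}\to0$. As $\norm{T_t-P}\le2$ and $I_{f_0}$ is dense, a routine $3\eps$-argument extends $T_tf\to Pf$ to all $f\in L^p$, which is the claim.

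I expect the genuine difficulties to be structural rather than computational: justifying the joint measurability needed for the Fubini step in the rigidity argument, and making precise that $T_t$ is an integral operator for all $t\ge t_0$. It is worth stressing where each hypothesis enters --- contractivity to pin $\abs{h}$ down as a fixed point, the integral-operator property (at a whole half-line of times, hence the role of strong continuity) to force the putative rotation to spread over uncountably many disjoint sets, and the strictly positive fixed point both to supply order-bounded orbits and to make $I_{f_0}$ dense. This interplay is the crux, and isolating it cleanly is what makes the proof short.
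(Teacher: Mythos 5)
Your proposal follows the same Jacobs--de Leeuw--Glicksberg skeleton as the paper, and your AM-compactness observation (order-bounded weakly null sequences are mapped by $T_{t_0}$ to norm null sequences) is correct --- it is essentially the compactness ingredient the paper uses. However, two of your steps have genuine gaps. The first is the claim that, once $X_r=\operatorname{Fix}(\cT)$, ``JdLG yields $T_t\to P$ in the weak operator topology.'' This is not a consequence of the JdLG theorem when you only have relatively \emph{weakly} compact orbits: the stable part then consists of vectors whose orbit merely has $0$ as a weak \emph{cluster point}, not vectors with $T_tf\to 0$ weakly. This is exactly the classical distinction between weak mixing and mixing: the Koopman semigroup of a weakly mixing but non-mixing measure-preserving flow is a positive contractive $C_0$-semigroup with strictly positive fixed point $\mathds{1}$, its reversible part reduces to the fixed space, and yet $T_t$ does not converge in the weak operator topology. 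That example satisfies every hypothesis in force at that point of your argument except the kernel hypothesis --- so the kernel operator \emph{must} enter this step, and your final upgrade (feeding ``$w_t\to 0$ weakly'' into AM-compactness) rests on an unproved premise. The repair uses only tools you already have, but in a different order: your AM-compactness lemma plus Eberlein--\v{S}mulian shows that $T_{t_0}[-cf_0,cf_0]$ is relatively \emph{norm} compact (if $g_n\in[-cf_0,cf_0]$, pass to a weakly convergent subsequence $g_{n_j}\to g$; then $g_{n_j}-g$ is order bounded and weakly null, so $T_{t_0}g_{n_j}\to T_{t_0}g$ in norm), hence all orbits are relatively norm compact since $T_tf\in T_{t_0}[-cf_0,cf_0]$ for $t\ge t_0$. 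This is precisely the paper's first step, and with norm-compact orbits JdLG does give strong convergence to $0$ on the stable part.

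The second gap is in your rotation-killing argument, and it is not the cosmetic issue you suggest. Your contradiction needs a single $y$ at which the phase rigidity holds for \emph{uncountably} many $t$ (a full-measure subset of an interval), and this forces you to integrate over $t$, i.e.\ to have a version of the kernels jointly measurable in $(t,x,y)$. Each $k_t$ is defined only up to a $\mu\otimes\mu$-null set, there are uncountably many $t$, and no measurable selection is provided; producing one is a nontrivial problem, not a routine Fubini justification. Nor can you retreat to countably many times: countably many pairwise essentially disjoint sets of positive measure are perfectly consistent with $\sigma$-finiteness, so the contradiction evaporates. The paper avoids rotations by an entirely different mechanism that requires neither joint measurability nor uncountable disjointness: compactness of order intervals in the reversible part $F$ (again supplied by the kernel operator) forces $F\cong\ell^p(I)$, an atomic lattice, and then strong continuity pins down each atom --- $S_te_i$ is a canonical unit vector depending continuously on $t$, hence equal to $e_i$ for all $t$. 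Until you either construct jointly measurable kernels or replace the rigidity argument by something like this atomicity argument, the heart of your proof is open.
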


An application of this result to semigroups generated by elliptic operators on $L^1$ can, for instance, be found in \cite{arendt2008}. 
Moreover, Theorem~\ref{thm:greiner} can be used to derive a famous result of Doob about the convergence of Markov semigroups on spaces of measures, 
see \cite{gerlach2012} and \cite[Sec~4]{gerlach2015}. Related results on $\ell^p$-sequence spaces and, more generally, on atomic measure spaces can be found in \cite{davies2005, keicher2006, wolff2008}. 

One of the major drawbacks of Theorem~\ref{thm:greiner} is its difficult proof. In fact, Greiner reduced the theorem to a $0$-$2$-law whose proof is itself technically quite involved. 
Here, we present a proof of Theorem~\ref{thm:greiner} which only uses the classical decomposition theorem by Jacobs, de Leeuw and Glicksberg 
and the observation that every positive integral operator on $L^p$ maps order intervals to relatively compact sets. 
Indeed, a rather explicit proof of the latter fact is presented in the Appendix of \cite{gerlachConvPOS}; 
however, the fact can also be deduced from abstract Banach lattice theory, see \cite[Prop~IV.9.8]{schaefer1974} and \cite[Cor~3.7.3]{meyer1991}. The methods used in the following proof are taken from a more general approach to the asymptotic theory of positive semigroup representations that was recently developed by the authors in \cite{gerlachConvPOS}. In the setting discussed here, the arguments from this approach become particularly neat and yield a surprisingly simple proof of Greiner's theorem, so we find it worthwhile to devote the present short note to this special case.

\begin{proof}[Proof of Theorem~\ref{thm:greiner}]
	We first show that the orbits of the semigroups $\cT$ are relatively compact. 
	To this end, let $c > 0$ and consider a vector $f$ in the order interval $[-cf_0,cf_0] \coloneqq \{g \in L^p: -cf_0 \le g \le cf_0\}$. For every $t \ge t_0$ we have
	\begin{align*}
		T_tf \in T_{t_0}T_{t-t_0}[-cf_0,cf_0] \subseteq T_{t_0} [-cf_0,cf_0].
	\end{align*}
	Since the latter set is relatively compact and independent of $t$, it follows that the orbit of $f$ under $\cT$ is relatively compact. 
	Since $f_0 > 0$ almost everywhere, the so-called principal ideal $\bigcup_{c>0} [-cf_0,cf_0]$ is dense in $L^p$ and as the semigroup is bounded, 
	it follows that the orbit of every vector in $L^p$ is relatively compact \cite[Lem~V.2.13]{nagel2000}.
	
	Hence, we can apply the decomposition theorem of Jacobs, de Leeuw and Glicksberg which is, for instance, described in \cite[Sec~2.4]{krengel1985}, \cite[Sec~V.2]{nagel2000} 
	and \cite[Sec~16.3]{haase2015}. 
	This theorem gives us a positive, contractive projection $P$ on $L^p$ 
	which commutes with each operator $T_t$ and which has the following properties: 
	$T_t$ converges strongly to $0$ on $\ker P$ as $t$ tends to $\infty$ and the restriction of $\cT$ to the range $F \coloneqq PE$ of $P$ 
	(which contains every fixed point of $\cT$ and which is a sublattice of $L^p$ as $P$ is contractive) 
	can be extended to a positive and contractive $C_0$-group $(S_t)_{t \in \R}$ on $F$.
	
	As $F$ is a closed sublattice of $L^p$, it is itself an $L^p$-space over some measure space. Let us show that $F$ is actually isometrically lattice isomorphic to $\ell^p(I)$ for some index set $I$. 
	It is known that this is the case if and only if every order interval in $F$ is compact; this fact follows for instance from \cite[Cor 21.13]{aliprantis1978}.
	So let $f,g \in F$ with $f \le g$. Then we have
	\begin{align*}
		[f,g]_F = T_{t_0}S_{-t_0}[f,g]_F \subseteq T_{t_0}[S_{-t_0}f,S_{-t_0}g]_E,
	\end{align*}
	where we used the subscripts $F$ and $E$ to distinguish order intervals in the spaces $F$ and $E$. The set $T_{t_0}[S_{-t_0}f,S_{-t_0}g]_E$ is relatively compact in $E$, so we conclude that $[f,g]_F$ is compact in $F$ and hence we have indeed $F \cong \ell^p(I)$. 

	Let $e_i \in \ell^p(I)$ be a canonical unit vector. Since each operator $S_t$ is an isometric lattice isomorphism, 
	$S_te_i$ is also a canonical unit vector for each $t \in \R$ and $i\in I$.
	It now follows from the strong continuity of $(S_t)_{t \in \R}$ that $S_t e_i = e_i$ for all $t$ sufficiently 
	close to $0$ and hence for all $t \in \R$ (cf.\ \cite[Prop 2.3]{wolff2008} for a more general observation).
	Thus, each operator $S_t$ is the identity map on $F$, i.e.\ each operator $T_t$ operates trivially on $F$. This proves the assertion.
\end{proof}

We point out that, combining the techniques presented here with results about positive group representations, one can derive considerable generalisations of Theorem~\ref{thm:greiner}. 
This is explained in detail in the author's recent paper~\cite{gerlachConvPOS}; we give a brief summary of those generalisations at the end of this note.

Now, we discuss a version of Theorem~\ref{thm:greiner} which does not require the semigroup to contain an integral operator 
but only to dominate a non-trivial integral operator. This result reads as follows.

\begin{theorem} \label{thm:pichor-rudnicki}
	Let $(\Omega,\mu)$ be a $\sigma$-finite measure space, let $p \in [1,\infty)$ and let $\cT \coloneqq (T_t)_{t \in [0,\infty)}$ be a positive and contractive $C_0$-semigroup 
	on $L^p \coloneqq L^p(\Omega,\mu)$. Assume that $\cT$ has a fixed point $f_0$ which fulfils $f_0 > 0$ almost everywhere and that the following assumption is fulfilled: 
	\begin{itemize}
		\item[$(*)$] For every fixed point $0 \not= f \ge 0$ of $\cT$ there exists a time $t \ge 0$ and a positive integral operator $K$ on $L^p$ such that $T_t \ge K$ and $Kf \not= 0$.
	\end{itemize}
	Then $T_t$ converges strongly as $t \to \infty$.
\end{theorem}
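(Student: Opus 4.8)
The plan is to reach strong convergence through the same endgame as in Theorem~\ref{thm:greiner}: split $L^p$ via the Jacobs--de Leeuw--Glicksberg decomposition into a part on which $T_t \to 0$ and a reversible sublattice $F$ carrying a group $(S_t)_{t\in\R}$, and then trivialise this group. The crucial difference is that under $(*)$ the semigroup no longer \emph{contains} a compactifying integral operator, so neither the relative compactness of the orbits nor the atomicity of $F$ is available for free. I would therefore replace the atomicity step entirely by showing that $F$ is already contained in the fixed space: once the peripheral point spectrum is trivial in this sense, $F = \mathrm{Fix}(\cT)$, each $S_t$ is the identity (the elements of $F$ being genuine fixed points), $T_t \to 0$ on $\ker P$, and hence $T_t$ converges strongly to the projection $P$. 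Securing the decomposition itself (that is, the relative compactness of the orbits) under the weaker hypothesis $(*)$ is delicate, and I would import it from the general framework of \cite{gerlachConvPOS}; this, together with the globalisation step below, is where the real work lies.

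The key new ingredient is a rigidity argument for peripheral eigenvectors driven by $(*)$. So let $0 \neq h \in L^p$ satisfy $T_t h = \lambda(t) h$ with $\abs{\lambda(t)} = 1$ for all $t$; since $t \mapsto \lambda(t)$ is a continuous character, $\lambda(t) = e^{i\omega t}$ for some $\omega \in \R$, and I must show $\omega = 0$. First I would observe that $f \coloneqq \abs{h}$ is a positive fixed point of $\cT$: from $\abs{h} = \abs{T_t h} \le T_t\abs{h}$ and $\norm{T_t\abs h} \le \norm{\abs h}$ one gets $T_t\abs h = \abs h$, because $g \ge v \ge 0$ together with $\norm g_p = \norm v_p$ forces $g = v$ in $L^p$ for every $p \in [1,\infty)$. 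As $f \neq 0$, condition $(*)$ now applies and provides a time $s \ge 0$ and a positive integral operator $K$ with $T_s \ge K$ and $Kf \neq 0$.

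Now I would exploit the domination pointwise. From $T_s f = f$ and $0 \le K \le T_s$ we get $Kf \le f$, and applying the modulus inequality for the positive operator $T_s - K$ to $h$ yields $\abs{\lambda(s) h - Kh} = \abs{(T_s - K)h} \le (T_s - K)\abs h = f - Kf$, while $\abs{Kh} \le K\abs h = Kf$. Comparing these two estimates at almost every point $y$ and invoking the reverse triangle inequality forces equality throughout: writing $h = u f$ with $\abs u = 1$ on $\{f > 0\}$, one obtains $Kh = \lambda(s)\, u\, (Kf)$, that is,
\[
  \int k(x,y)\,u(x)f(x)\dx\mu(x) = e^{i\omega s}\, u(y)\int k(x,y)\,f(x)\dx\mu(x)
\]
for almost every $y$ with $(Kf)(y) > 0$. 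Taking moduli and using equality in the triangle inequality then shows that $u$ is almost everywhere constant, and equal to $e^{i\omega s} u(y)$, on the input set $\{x : k(x,y)f(x) > 0\}$.

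The final, and hardest, step is to turn this local phase rigidity into the global conclusion $\omega = 0$. The relation above says that $u$ is constant along the kernel of $K$ up to the single rotation $e^{i\omega s}$; combined with the fact that $f_0 > 0$ almost everywhere — so that, feeding $(*)$ with the positive fixed points at hand, the available kernels link the support of $f$ throughout the space — any recurrence in this kernel dynamics forces $e^{i\omega s} = 1$, and running the argument over the family of times supplied by $(*)$ yields $\omega = 0$. Hence every peripheral eigenvector is a fixed point, $F = \mathrm{Fix}(\cT)$, and the reduction of the first paragraph gives strong convergence. I expect this globalisation — making precise how the strict positivity of $f_0$ and the full strength of $(*)$ propagate the local rigidity — to be the main obstacle, together with the relative-compactness input needed to set up the decomposition in the first place.
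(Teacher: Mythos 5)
Your proposal has two genuine gaps, and they sit exactly where the real content of the theorem lies. The first is the relative compactness of the orbits: you explicitly defer this to the general framework of \cite{gerlachConvPOS}, but under hypothesis $(*)$ this is precisely the step that has to be proved --- it is where $(*)$ does its main work in the paper's argument. The paper obtains it as follows: since the integral operators form a band in the regular operators on $L^p$, each $T_t$ splits as $T_t = K_t + R_t$ with $K_t$ the \emph{maximal} integral operator below $T_t$; maximality, together with the fact that the product of a positive integral operator with a positive operator is again an integral operator, yields $R_{t+s} \le R_t R_s$, so $R_t f_0$ decreases to some $g \ge 0$; this $g$ satisfies $T_t g \ge R_t g \ge g$, contractivity upgrades this to $T_t g = R_t g = g$, hence $K_t g = 0$ for all $t$, and now $(*)$ forces $g = 0$, i.e. $\norm{R_t f_0} \searrow 0$. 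Total boundedness of orbits then follows from $T_{t_0+t}f \in K_{t_0}[-f_0,f_0] + [-R_{t_0}f_0, R_{t_0}f_0]$. Without this decomposition (or a substitute for it) your proof never gets off the ground, since the Jacobs--de Leeuw--Glicksberg splitting you build everything on is not available.

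The second gap is the endgame. Your local rigidity computation is correct: $\abs{h}$ is indeed a fixed point (strict monotonicity of the $L^p$-norm), and the equality-in-the-triangle-inequality argument does show that $u$ is a.e.\ constant, equal to $e^{i\omega s}u(y)$, on each input set $\{x : k(x,y)f(x) > 0\}$. But the passage from this to $\omega = 0$ is not an argument, as you yourself concede, and it faces two structural obstructions. First, $(*)$ supplies a \emph{single} time $s$ and a \emph{single} operator $K$ for the fixed point $f = \abs{h}$; there is no ``family of times supplied by $(*)$'' to run over, so even a successful recurrence argument would only yield $e^{i\omega s} = 1$, leaving $\omega \in \tfrac{2\pi}{s}\Z$ undetermined. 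Second, the natural repair --- using $T_r K \le T_{r+s}$ for other times $r$ --- stumbles on the fact that a positive operator may annihilate a positive vector, so $T_r K f \ne 0$ itself requires proof. The paper sidesteps peripheral-spectral analysis entirely: once orbit compactness is in hand, it shows that order intervals in the reversible part $F$ are compact via $[-f_0,f_0]_F \subseteq K_t[-f_0,f_0]_E + [-R_tf_0, R_tf_0]_E$, concludes $F \cong \ell^p(I)$, and trivialises the group $(S_t)_{t \in \R}$ by the permutation-of-atoms argument plus strong continuity --- exactly avoiding the cocycle/recurrence difficulties your route runs into. If you want to salvage your $F = \operatorname{Fix}(\cT)$ strategy, you would in any case first need the $K_t + R_t$ machinery above, at which point the paper's atomicity argument is the shorter path.
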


Note that the assumption~$(*)$ is automatically fulfilled if $\cT$ is irreducible, meaning that there exists no $\cT$-invariant band in $L^p$ except for $0$ and $L^p$, 
and if we have in addition $T_{t_0} \ge K \ge 0$ for at least one time $t_0 \ge 0$ and a non-zero integral operator $K$. 
This follows since every positive non-zero fixed point of an irreducible semigroup is strictly positive almost everywhere according to \cite[Prop~C-III-3.5(a)]{nagel1986}.

For irreducible Markov semigroups on $L^1$-spaces Theorem~\ref{thm:pichor-rudnicki} was proved by Pich{\'o}r and Rudnicki in \cite[Thm~1]{pichor2000}. 
This result has applications to various models from mathematical biology, see for instance \cite{rudnicki2002a, rudnicki2003, bobrowski2007, mackey2008, du2011, banasiak2012a, mackey2013}. 
Conditions similar to~$(*)$ also occurred in the literature on several occasions, though in a more explicit form. 
We refer for instance to \cite[pp.~308 and~309]{pichor2016} and to the introduction of the recent article~\cite{pichor2017}. 
A version of Theorem~\ref{thm:pichor-rudnicki} for irreducible semigroups on Banach lattices with order continuous norm was proved by the first author in \cite[Thm 4.2]{gerlach2013b}.

We only give a sketch of the proof of Theorem~\ref{thm:pichor-rudnicki}. 
For details we refer to \cite{gerlachConvPOS}, where the theorem is shown in a considerably more general setting, but with a more complex and technically more involved proof.

\begin{proof}[Sketch of the proof of Theorem~\ref{thm:pichor-rudnicki}]
	Since the set of all integral operators is a band within the regular operators on $L^p$, for each $t \ge 0$ we find a maximal integral operator $0 \le K_t \le T_t$
	and define $R_t \coloneqq T_t - K_t \ge 0$. As the product of a positive integral operator with a positive operator is a again an integral operator \cite[Prop~1.9(e)]{arendt1994}, 
	it easily follows from the maximality of $K_t$ and the semigroup law for $\cT$ that $R_{t+s} \le R_tR_s$ for all $s,t \ge 0$. 
	Hence, $R_{t+s}f_0 \le R_tR_sf_0 \le R_tT_sf_0 = R_tf_0$ for all $s,t \ge 0$, so $(R_tf_0)_{t \ge 0}$ decreases in norm to a vector $0 \le g \in L^p$.
	This vector fulfils $T_tg \ge R_t g = \lim_s R_tR_s f_0 \ge \lim_{s}R_{t+s}f_0 = g$ for each $t \ge 0$. 
	Since each operator $T_t$ is contractive, we conclude that actually $T_tg = R_t g = g$ and hence $K_tg = 0$ for all $t \ge 0$. 
	Our condition~$(*)$ now implies that $g = 0$, so we have shown that $R_t f_0 \searrow 0$ in norm as $t \to \infty$.
	
	Now we can see that the orbit of each vector $f \in [-f_0,f_0]$ is relatively compact. Indeed, let $\varepsilon > 0$ and choose $t_0 \ge 0$ 
	such that $\norm{R_{t_0}f_0} < \varepsilon$. For each $t \ge 0$ we obtain
	\begin{align*}
		T_{t_0+t}f \in K_{t_0}[-f_0,f_0] + [-R_{t_0}f_0, R_{t_0}f_0]
	\end{align*}
	and thus the orbit of $f$ under $\cT$ is contained in the set
	\begin{align*}
		\{T_tf: \; t \in [0,t_0]\} \cup \big( K_{t_0}[-f_0,f_0] + B_\varepsilon(0) \big),
	\end{align*}
	where $B_\varepsilon(0)$ denotes the open ball with radius $\varepsilon$ around $0$. Hence, the orbit of $f$ is totally bounded and thus relatively compact. 
	
	Since the principal ideal $\bigcup_{c > 0} [-cf_0,cf_0]$ is dense in $L^p$, we conclude that the orbit of actually every vector $f \in L^p$ under $\cT$ is relatively compact \cite[Lem~V.2.13]{nagel2000}, 
	so we can apply the Jacobs--de Leeuw--Glicksberg decomposition theorem. Now one proceeds as in the proof of Theorem~\ref{thm:greiner}. The only difficulty in this situation is to see that each order interval $[f,g]_F$ in $F$ is compact. 
	To this end, one first observes that
	\begin{align*}
		[-f_0,f_0]_F & \subseteq K_t S_{-t} [-f_0,f_0]_F +  R_tS_{-t}[-f_0,f_0]_F \\
		& \subseteq K_t[-f_0,f_0]_E + [-R_tf_0, R_tf_0]_E
	\end{align*}
	for each $t \ge 0$, where $(S_t)_{t \in \R}$ is given as in the proof of Theorem~\ref{thm:greiner}. Hence, the order interval $[-f_0,f_0]_F$ is totally bounded and thus compact.
	Now one can use that the principal ideal $\bigcup_{c > 0} c[- f_0, f_0]_F$ is dense in $F$ according to \cite[Cor~2 to Thm~II.6.3]{schaefer1974} in order to conclude that every order interval $[f,g]_F$ is compact in $F$.
\end{proof}

As mentioned above, Theorems~\ref{thm:greiner} and~\ref{thm:pichor-rudnicki} allow for considerable generalisations. First of all, Theorem~\ref{thm:greiner} remains true if $L^p$ 
is replaced with a Banach lattice $E$ with order continuous norm and if the semigroup $\cT$ is only assumed to be bounded instead of contractive. 
In this case, the proof clearly requires a bit more lattice theory.
Moreover, the range $F$ of the projection $P$ needs no longer be a sublattice but it is still a so-called lattice subspace of $E$, 
meaning that it is a lattice with respect to the order induced by $E$ but not with respect to the same lattice operations. 
We point out that even for $E = L^p$ the space $F$ is no longer an $\ell^p$-space in this case; instead, it is an atomic Banach lattice with order continuous norm. 
For more details we refer to \cite{gerlachConvPOS}.

Theorem~\ref{thm:pichor-rudnicki} can be generalised to bounded positive semigroups on Banach lattices with order continuous norm, too. 
However, the first part of the proof shows that one needs an additional technical assumption in this case: 
we have to assume that every \emph{super-fixed point} of the semigroup is a fixed point, meaning that $T_tg = g$ for all $t \ge 0$ whenever $T_tg \ge g \ge 0$ for all $t \ge 0$. 
Again, we refer to \cite{gerlachConvPOS} for details.

The most significant generalisation of Theorems~\ref{thm:greiner} and~\ref{thm:pichor-rudnicki} refers however 
to the strong continuity assumption with respect to the time parameter. 
In the proof of Theorem~\ref{thm:pichor-rudnicki}, this assumption is first employed when one uses that a set of the form $\{T_tf: \; t \in [0,t_0]\}$ is compact, but this step of the argument can easily be circumvented by using a bit more information about the Jacobs--de Leeuw--Glicksberg decomposition. Much more importantly, the proofs of Theorems~\ref{thm:greiner} and~\ref{thm:pichor-rudnicki} both use the strong continuity to deduce that the positive and contractive group $(S_t)_{t \in \R}$ acts trivially on $\ell^p(I)$. 
Yet, it turns out that this can also be deduced without strong continuity by only using algebraic properties of the additive group $\R$. 
Hence, if one is willing to invest more work in the proofs, one can show that Theorems~\ref{thm:greiner} and~\ref{thm:pichor-rudnicki} and their counterparts on Banach lattices 
remain true for semigroup representations $(T_t)_{t \in [0,\infty)}$ without any continuity or measurability assumptions with respect to $t$. 
For detailed results and proofs, we refer again to \cite{gerlachConvPOS} where it is also demonstrated that the same methods can be used 
to obtain convergence results for positive representations of more general semigroups.

\bibliographystyle{abbrv}
\bibliography{}

\end{document}